\newcommand{\tr}{\top}
\newcommand{\Uset}{\mathbb{U}}
\newcommand{\Xset}{\mathbb{X}}
\newcommand{\Yset}{\mathbb{Y}}
\newcommand{\Rset}{\mathbb{R}}
\newcommand{\Nset}{\mathbb{N}}
\newcommand{\cU}{\mathcal{U}}
\newcommand{\cX}{\mathcal{X}}
\newcommand{\cV}{\mathcal{V}}
\newcommand{\cH}{\mathcal{H}}
\newcommand{\cZ}{\mathcal{Z}}
\newcommand{\cT}{\mathcal{T}}
\newcommand{\cY}{\mathcal{Y}}
\newcommand{\bu}{\mathbf{u}}
\newcommand{\by}{\mathbf{y}}
\newcommand{\bg}{\mathbf{g}}
\newcommand{\bk}{\mathbf{k}}
\newcommand{\rankk}{\operatorname{rank}}
\newcommand{\col}{\operatorname{col}}
\newcommand{\Span}{\operatorname{span}}
\begin{document}
\mainmatter              
\title{A universal reproducing kernel Hilbert space for learning nonlinear systems operators}
\titlerunning{A universal RKHS for learning nonlinear systems operators}  
%
\author{Mircea Lazar\inst{1}}
\authorrunning{Mircea Lazar} 
%
\tocauthor{Mircea Lazar}
\institute{Eindhoven University of Technology, Eindhoven 5612AP, The Netherlands \email{m.lazar@tue.nl}}

\maketitle             

\begin{abstract}
In this work, we consider the problem of learning nonlinear operators that correspond to discrete-time nonlinear dynamical systems with inputs. Given an initial state and a finite input trajectory, such operators yield a finite output trajectory compatible with the system dynamics. Inspired by the universal approximation theorem of operators tailored to radial basis functions neural networks, we construct a class of kernel functions as the product of kernel functions in the space of input trajectories and initial states, respectively. We prove that for positive definite kernel functions, the resulting \emph{product reproducing kernel Hilbert space} is dense and even complete in the space of nonlinear systems operators, under suitable assumptions. This provides a universal kernel-functions-based framework for learning nonlinear systems operators, which is intuitive and easy to apply to general nonlinear systems.  
\keywords{Nonlinear dynamical systems, nonlinear operators, kernel functions, data-driven learning and control}
\end{abstract}
\section{Introduction}
\label{sec1}
The field of data-driven modeling and control has recently received an increased interest due to advances in artificial intelligence and computing hardware, but also due to new applications of the fundamental lemma \cite{Willems_2005_fundamental} in data-driven simulation, prediction, and control \cite{Coulson2019,Berberich2020,Markovsky_2023,Markovsky_survey}. The current effort is focused on data-driven modeling and control of nonlinear systems and one of the popular approaches in this field is based on kernel functions and reproducing kernel Hilbert spaces, see the 
comprehensive surveys \cite{Pillonetto_Kerneles_2014} and \cite{Martin_2023_SOS}. In general, we encounter methods that use kernel functions to learn \emph{functions} or \emph{mappings} and methods that use kernel functions to learn \emph{operators}.

Within the first category mentioned above, kernel functions have been used to learn/model the system dynamics with the purpose of designing data-driven (predictive) controllers, in \cite{Lian_Jones_2021,Claudio_kernels,RoKDeePC}. In particular, \cite{RoKDeePC} uses kernels to parameterize multi-step input-output predictors for data-enabled predictive control (DeePC) \cite{Coulson2019} and minimizes the multi-step prediction error when computing the interpolation coefficients. The kernel functions are defined in a lifted space that includes both initial conditions (past inputs and outputs) and an input sequence of finite length in time (given by the prediction horizon). Other relevant works include \cite{Maddalena_kernels}, which provides optimized bounds for kernel-based approximations, and the recent work \cite{Molodchyk_2024exploring}, which explores links between the fundamental lemma \cite{Willems_2005_fundamental} and kernel regression for linear systems and specific classes of nonlinear systems, i.e., Hammerstein and flat nonlinear systems. In particular, \cite{Molodchyk_2024exploring} formulates the kernel regression problem for vector valued mappings and extends standard bounds on the approximation error \cite{Fasshauer2011PositiveDK,Claudio_kernels} to singular Gramians. 

Alternatively, a discrete-time dynamical system can be regarded as an operator defined on a Hilbert space \cite{Henk_kernel_2023} that maps sequences of inputs to sequences of outputs (or input functions of time to output functions of time, in the continuous-time case).  In \cite{Henk_kernel_2023}, the operator learning problem is formulated as a kernel regression problem that includes knowledge of physical properties of the operator, via integral quadratic constraints. Also, \cite{Henk_kernel_2023} provides a comprehensive summary of results on reproducing kernel Hilbert spaces for vector-valued operators. More recently, \cite{Shali_Henk_2024representer} continues in this framework by considering kernel-based learning of monotone operators related to passive dynamical systems.   

Two fundamental results on universal approximation of nonlinear operators originate much earlier in the  fundamental works of Tianping Chen and Hong Chen, i.e., \cite{Chen_ONN}, which considers deep neural networks, and \cite{Chen_RBF}, which considers radial basis functions neural networks.These works have been recently brought into attention by the development of the deep operator network (DeepONet) \cite{DeepONet} for learning nonlinear operators, with application to continuous-time dynamical systems and partial differential equations. \cite{DeepONet} also provides a bound on the number of samples of the input signals required to achieve a desired approximation error. More recently, \cite{Batlle_2024_Kernels_Competitive} developed a kernel-based framework for learning operators defined on vector-valued reproducing kernel Hilbert spaces, associated with partial differential equations. Therein, conditions for asymptotic convergence to zero of the approximation error were established. 

In this paper, we consider the problem of learning a nonlinear system operator using the framework of reproducing kernel Hilbert spaces (RKHS) \cite{Bishop_2006,Steinwart_2008,Fasshauer2011PositiveDK} and we consider the following questions: which assumptions regarding the set of data points, the class and construction of kernel functions, and the topology of the corresponding Hilbert space are necessary to achieve a RKHS that is dense and even complete in the space of operators of interest. Specifically, we focus on operators arising in discrete-time nonlinear systems described by difference equations with initial states, inputs, and outputs restricted to compact sets. We explicitly parameterize output trajectories as functions of input trajectories \emph{and} initial states (or initial conditions in a more general sense). As such, we define the domain of the operator as a \emph{product} Hilbert space and the corresponding kernel functions are defined as the product of kernel functions in the input trajectories space and the state space, respectively. Based on the universal approximation theorem for operators using radial basis functions neural networks \cite{Chen_RBF}, we prove that for positive definite kernels \cite{Micchelli_2006} the resulting \emph{product} RKHS is dense in the space of nonlinear systems operators and its unique minimizer is a universal approximator.  

The developed product RKHS framework for learning nonlinear systems operators brings important contributions with respect to existing frameworks, such as, \emph{(i)}~regarding \cite{DeepONet,Chen_RBF}, it provides a universal operator learning framework that does not require training a large neural network, i.e., the interpolation coefficients are simply obtained by inverting the product RKHS Gram matrix; \emph{(ii)}~regarding \cite{RoKDeePC}, the developed product RKHS framework scales much better with the number of data points in terms of building and inverting the Gram matrix.

\section{Preliminary definitions and results}
\label{sec2}
Let $\Rset$ and $\Nset$ denote the sets of real and natural numbers, respectively. For any finite number $q\in\Nset_{\geq 1}$ of vectors (or functions) $\{v_1,\ldots,v_q\}$, we will make use of the operator $\col(v_1,\ldots,v_q):=[v_1^\tr,\ldots,v_q^\tr]^\tr$. For two matrices $A\in\Rset^{n\times n}, B\in\Rset^{m\times m}$, $A\otimes B\in\Rset^{nm\times nm}$ denotes their Kronecker product. For two vectors $a\in\Rset^n, b\in\Rset^m$, $a\otimes b\in\Rset^{nm}$ denotes their Kronecker vector product, i.e., for $a=[a_1\, a_2]^\top$, $b=[b_1\, b_2]^\top$, $a\otimes b=[a_1b_1\, a_1b_2 \, a_2b_1 \, a_2b_2]^\top$.

As the data generating system, we consider an unknown discrete-time nonlinear system with input $u:\Nset\rightarrow \Rset^m$, output $y : \Nset\rightarrow \Rset^p$, and state $x:\Nset \rightarrow \Rset^n$. We assume that all measurable signals are noise free and for ease of notation we use the state $x$ to denote either a minimal, true state, or a non-minimal one, i.e., represented by past inputs and outputs. For some finite $N\in\Nset_{\geq 1}$, we denote the corresponding Hilbert spaces $\cU:=\ell_2(\{0,1,\ldots,N\},\Rset^m)$, $\cY:=\ell_2(\{0,1,\ldots,N\},\Rset^p)$ and $\cX:=\ell_2(\{0\},\Rset^n)$, where the notation was inspired by \cite{Shali_Henk_2024representer}. Note that these are spaces of square summable sequences by definition.

The considered operator learning problem is formulated as follows: Given a set of input trajectories $\{\bu_1,\ldots,\bu_{T_u}\}$ with $\bu_i\in\cU$, a set of initial states $\{x_1,\ldots,x_{T_x}\}$ with $x_j\in\cX$,  and a corresponding set of output trajectories \[\{\by_1^1,\ldots,\by_{T_u}^1,\ldots,\by_1^{T_x},\ldots\by_{T_u}^{T_x}\},\quad \by_i^j\in\cY,\] 
find an accurate approximation of the operator $G(\bu)(x)=\by\in\Rset^{p(N+1)}$, with respect to some suitable norm. The operator $G$ maps sequences $\bu\in\cU$ into sequences $\by\in\cY$ for every $x\in\cX$ and it is assumed that the sequences $\by$ are continuous functions of $x$. Above $T_u\in\Nset_{\geq 1}$ is the number of input trajectories and $T_x\in\Nset_{\geq 1}$ is the number of initial states. Let $[G(\bu)(x)]_q$ denote the $q$-th element of $G(\bu)(x)$, with $q=1,\ldots, p(N+1)$.
\subsection{Universal approximation of operators based on radial basis functions neural networks}
\label{sec2.1}
Next we recall the fundamental operator approximation result for radial basis functions (RBF) neural networks \cite[Theorem~5]{Chen_RBF} adapted to the specific setting of this paper, i.e., \emph{(i)}~discrete-time dynamical systems, which means the input and output signals are already sampled in time; \emph{(ii)}~sampling in the space of initial conditions; and \emph{(iii)}~vector valued operators. This result will be instrumental in the construction of a \emph{dense} RKHS in the next section.

First we need to recall the notation in \cite{Chen_RBF}. Let $g\in C(\Rset)\cap S'(\Rset)$ denote a suitable radial basis function, where $C(\Rset)$ denotes the Banach space of all continuous functions defined on $\Rset$ with norm $\|f\|_{C(\Rset)}=\max_{s\in\Rset}|f(s)|$. Furthermore, $S'(\Rset)$ denotes the set of all linear functional defined on $S(\Rset)$, i.e., the set of infinitely differentiable functions which are rapidly decreasing at infinity. This definition covers most typical RBFs, but even polynomials are excluded \cite{Chen_RBF}. 
\begin{theorem}\cite[Theorem~5]{Chen_RBF}\label{thm:Chen_and_Chem}
Suppose that $g\in C(\Rset)\cap S'(\Rset)$ is not an even polynomial, $\Uset\subseteq \cU$, $\Xset\subseteq\cX$ are two compact sets in $\cU$ and $\cX$, respectively and $G$ is a nonlinear continuous operator, which maps $\Uset$ into $C(\Xset)$. Then for any $\epsilon>0$ there are positive integers $T_u, T_x$, constants $c_{iq}^j, \mu_i, \lambda_j\in\Rset$, $i=1,\ldots,T_u$, $j=1,\ldots,T_x$, $q=1,\ldots,p(N+1)$, points $\bu_1,\ldots,\bu_{T_u}\in\Uset$, $x_1,\ldots, x_{T_x}\in\Xset$, such that 
\begin{equation}
\label{eq:chen}
\left |[G(\bu)(x)]_q-\sum_{i=1}^{T_u}\sum_{j=1}^{T_x}c_{iq}^j g\left(\mu_i\|\bu-\bu_i\|\right)g\left(\lambda_j\|x-x_j\|\right)\right|<\epsilon
\end{equation}
for all $\bu\in \Uset$, all $x\in\Xset$ and all $q=1,\ldots,p(N+1)$.
\end{theorem}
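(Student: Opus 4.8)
The plan is to reduce the statement of Theorem~\ref{thm:Chen_and_Chem} to the original operator-approximation result \cite[Theorem~5]{Chen_RBF}, by treating the dependence on the initial state $x$ as an extra ``spatial'' coordinate of the output and the input trajectory $\bu$ as the operator argument. First I would fix a component index $q\in\{1,\ldots,p(N+1)\}$ and consider the scalar-valued operator $G_q\colon\Uset\to C(\Xset)$ defined by $G_q(\bu)(x):=[G(\bu)(x)]_q$. Since $G$ is a nonlinear continuous operator from $\Uset$ into $C(\Xset)$ and the $q$-th coordinate projection $\Rset^{p(N+1)}\to\Rset$ is continuous, $G_q$ is again a nonlinear continuous operator into $C(\Xset)$; the compactness hypotheses on $\Uset\subseteq\cU$ and $\Xset\subseteq\cX$ are exactly those required in \cite{Chen_RBF}. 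Thus for each $q$ the cited theorem yields integers $T_u^{(q)},T_x^{(q)}$, points $\bu_i\in\Uset$, $x_j\in\Xset$, scalars $\mu_i,\lambda_j$, and coefficients $c_{iq}^j$ such that the bound \eqref{eq:chen} holds for that particular $q$.

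The second step is to make the construction uniform in $q$. The issue is that \cite[Theorem~5]{Chen_RBF} as stated produces, for each $q$, its own sample set $\{\bu_i\}$, $\{x_j\}$, and its own scaling constants $\mu_i,\lambda_j$; to get a single set of points and scalings valid for all $q$ simultaneously I would either (a) invoke the standard observation that in the proof of \cite{Chen_RBF} the samples and inner scalings depend only on the modulus of continuity of the operator and on $\Uset,\Xset$ — not on the target function being approximated — so one may take a common refinement $\{\bu_1,\ldots,\bu_{T_u}\}$, $\{x_1,\ldots,x_{T_x}\}$ with $T_u:=\max_q T_u^{(q)}$ (and likewise for $T_x$) and a common choice of $\mu_i,\lambda_j$, setting the unused coefficients to zero; or (b) apply the theorem componentwise and then pass to the union of all sample sets, re-indexing, again zero-padding the coefficient arrays $c_{iq}^j$ so that each component's approximant only uses its own nodes. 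Either way one obtains a single list of points and constants and a full coefficient tensor $(c_{iq}^j)$ for which \eqref{eq:chen} holds for all $q$ at once.

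The main obstacle is precisely this uniformization argument, because it requires either a careful reading of the internal structure of the proof in \cite{Chen_RBF} (to justify that the nodes and inner weights are target-independent) or an explicit refinement/padding bookkeeping step; getting the indexing consistent — in particular that the same $\mu_i$ is attached to $\bu_i$ and the same $\lambda_j$ to $x_j$ across all components — is where care is needed. Everything else is routine: the adaptation to discrete time is vacuous (sampled-in-time signals are just points of the finite-dimensional sequence spaces $\cU,\cY$), the sampling in the space of initial conditions is handled by the $C(\Xset)$-valued formulation of \cite{Chen_RBF} together with the compactness of $\Xset$, and the passage from scalar to vector-valued operators is the componentwise decomposition of the first step. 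I would close by remarking that the approximant in \eqref{eq:chen} is exactly a product of two RBF-type expansions — one in $\bu$, one in $x$ — which is the separable structure exploited in the construction of the product RKHS in the next section.
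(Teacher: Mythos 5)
Your proposal matches the paper's own justification of this (cited) result: the paper likewise applies the scalar-valued theorem of Chen and Chen componentwise, obtains a common set of points by aggregating the per-component samples, and zero-pads the coefficients $c_{iq}^j$ --- exactly your route (b) --- while also noting, as you do, that the discrete-time and state-sampling adaptations are immediate since $\cU$ is a Hilbert (hence Banach) space and $\Xset$ is compact. The only detail the paper mentions that you omit is that the original theorem attaches a different set of input points $\bu_{ij}$ to each $x_j$, but this is absorbed by the same union-and-zero-padding device you already describe.
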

Note that \cite[Theorem~5]{Chen_RBF} considers input and output trajectories continuous in time and sampled in time, so therein $x$ represents continuous time and $x_j$ are time samples (although sampling in state is also mentioned as a possibility). Since in the discrete-time case $\bu$ and $\by$ are sampled trajectories (sequences) by definition, it is no longer necessary to sample in time; instead we consider sampling in the state space to cover all output trajectories generated by a set of initial conditions. In \cite[Theorem~5]{Chen_RBF} $\cU$ must be a Banach space, which is true in our case since $\cU$ is a Hilbert space. 

It is also important to point out that $g\in C(\Rset)\cap S'(\Rset)$ rather denotes a class of functions in \cite[Theorem 5]{Chen_RBF} instead of a fixed function. I.e., the universal approximation theorem holds with different functions for the input sequences space (i.e., $g_u$) and the state space (i.e., $g_x$), respectively, as long as these functions belong to  $C(\Rset)\cap S'(\Rset)$ and they are not an even polynomial. In fact, \cite[Remark 4]{Chen_RBF} states that radial basis functions could be mixed with affine basis functions in the universal approximation theorem.

Another relevant aspect is that \cite[Theorem~5]{Chen_RBF} considers real-valued operators. However, since the approximation theorem holds for every separate element of the operator, we can construct a common set of points in $\Uset$ and $\Xset$ by aggregating all the points that exist for all elements and then we obtain \eqref{eq:chen} by setting some of the $c_{iq}^j$ equal to zero, for each $q$-th element. Also, \cite[Theorem~5]{Chen_RBF} uses a different set of points $\bu_i$ for each point $x_j$, i.e., $\bu_{ij}$. By using a similar aggregate construction, we can group all the $\bu_{ij}$ points for all $j$ in a single set of points $\bu_i$ and then set some of the coefficients $c_{iq}^j$  equal to zero for each $j$ to obtain \eqref{eq:chen}.

Note that the radial basis functions $g$ used in \cite[Theorem 5]{Chen_RBF} can be alternatively defined as kernel functions. For example, let $s:=x-x_j$, $\lambda:=\frac{1}{\sigma^2}$ and consider a Gaussian radial basis function:
\begin{equation}
\label{eq:rbf:ker}
g(\lambda \|s\|)=e^{-\lambda\|s\|^2}=e^{-\frac{1}{\sigma^2}\|s\|^2}=e^{-\frac{1}{\sigma^2}\|x-x_j\|^2}=k(x,x_j),
\end{equation}
i.e., we have obtained the Gaussian kernel function. This analogy will be instrumental in the next section for constructing a product RKHS. To this end, next we introduce some basic RKHS definitions.

\subsection{Reproducing kernel Hilbert spaces}
\label{sec2.2}
Compared to the operator learning approaches using RKHS in \cite{Henk_kernel_2023,Shali_Henk_2024representer}, for the setting of this paper, i.e., discrete-time dynamical systems and Hilbert spaces of square summable sequences, we adopt a simplified approach. More specifically, we regard operators $G : \cX\rightarrow \cY$, where $\cX$ denotes a generic Hilbert space of functions in this subsection, and $\cY$ is the Hilbert space of output trajectories, as an aggregation of a finite number of functionals from $\cX$ to the set of real numbers. Moreover, if the elements of $\cX$ are square summable sequences of finite length that can take arbitrary values in a suitable vector space, the functionals can be regarded as functions. As such we can make use of the standard RKHS theory for real-valued functions/functionals \cite{Fasshauer2011PositiveDK} and we can adopt a similar approach as in \cite{Molodchyk_2024exploring}, i.e., we can assumes a common RKHS for all functions/functionals. To this end we recall the following standard RKHS definitions, see, e.g., \cite{kernels_technical_report,Fasshauer2011PositiveDK}. 
\begin{definition}
\label{kerdef}
A function $k : \cX \times \cX\rightarrow \Rset$ is called a \emph{kernel function} if it satisfies the following properties: \emph{(i)} it is symmetric, i.e., $k(x_1,x_2)=k(x_2,x_1)$ for all $(x_1, x_2)\in\cX\times\cX$; and \emph{(ii)} it is positive semi-definite, i.e., for every positive integer $T$ and distinct set of points $\{x_1,\ldots,x_T\}\in\cX$ the matrix
\begin{equation}
\label{eq:gram}
K:=\begin{pmatrix}k(x_1,x_1) & k(x_1,x_2) &\ldots&k(x_1,x_T)\\k(x_2,x_1) & k(x_2,x_2) &\ldots&k(x_2,x_T)\\ \vdots & \vdots & \ddots & \vdots\\ k(x_T,x_1) & k(x_T,x_2) &\ldots&k(x_T,x_T)\\\end{pmatrix}\in\Rset^{T\times T}
\end{equation}
is positive semi-definite. The matrix $K$ is called the \emph{Gram matrix}. Moreover, a kernel function $k$ is called a \emph{universal kernel} (or a \emph{positive definite kernel}) if its corresponding Gram matrix is positive definite.
\end{definition}

Two important classes of universal kernels \cite{Micchelli_2006}, which are also radial basis functions that comply with the conditions of \cite[Theorem 5]{Chen_RBF} are Gaussian kernels, 
\begin{equation}
\label{eq:gauss:k}
k(x_1,x_2):=e^{-\frac{1}{\sigma^2}\|x_1-x_2\|^2}
\end{equation}
and Hardy reverse multiquadratics kernels, 
\begin{equation}
\label{eq:hardyR:k}
k(x_1,x_2):=\left(1+\frac{1}{\sigma^2}\|x_1-x_2\|^2\right)^{-\frac{1}{2}},
\end{equation}
where $\sigma$ is a positive real number.
\begin{definition}
\label{defRKHS}
Given a kernel function $k : \cX\times\cX\rightarrow\Rset$, a Hilbert space $\cH(k,\cX)$ is a \emph{reproducing kernel Hilbert space} for $k$ if \emph{(i)} for every $x\in\cX$, the function $k(x,\cdot)\in\cH(k,\cX)$ and \emph{(ii)} the reproducing property holds, i.e., $f(x)=\langle f, k(x,\cdot) \rangle_{\cH(k,\cX)}$ for every $f\in\cH(k,\cX)$ and $x\in\cX$. 
\end{definition}

In data-based learning (or fitting) problems \cite{Fasshauer2011PositiveDK}, typically a data-dependent finite dimensional Hilbert space is considered, i.e., \[\cH(k,\cX)=\Span\{k(\cdot, x_1), k(\cdot, x_2),\ldots,k(\cdot, x_T)\},\] where $\{x_1,\ldots,x_T\}$ are distinct data points and $T\geq 1$ is sufficiently large. In what follows we will make use of the following notation for the vector-kernel corresponding to the standard basis, i.e., $\Span\{k(\cdot, x_1), k(\cdot, x_2),\ldots,k(\cdot, x_T)\}$:
\begin{equation}
\label{eq:vec:ker}
\bk(x):=\col(k(x_1,x), k(x_2,x),\ldots,k(x_T,x))\in\Rset^T, \quad\forall x\in\cX.
\end{equation}
Given a set of observations (measurements) $\{y_1,\ldots,y_T\}$ with each $y_i\in\Rset^{n_y}$ corresponding to the set of data points 
$\{x_1,\ldots,x_T\}$ with each $x_j\in\cX$, the operator learning problem can be formulated in the RKHS $\cH(k,\cX)$ as \cite{Bishop_2006,Molodchyk_2024exploring}
\[\min_{\Theta\in\Rset^{T\times n_y}}\|Y^\top-K\Theta\|_F^2,\]
where $\|M\|_F$ denotes the Frobenius norm of a matrix $M$, $K$ is the corresponding Gram matrix and $Y=\begin{pmatrix}y_1&y_2&\ldots&y_T\end{pmatrix}\in\Rset^{n_y\times T}$. If $k$ is a positive definite kernel and hence $K$ is invertible, there exists a unique solution corresponding to the unique interpolated surface passing through all the points $y_i$, which yields the optimal approximation  
\begin{equation}
\label{eq:minimizer}
G_{\cH(k,\cX)}^\ast(x):=(\Theta^\ast)^\top\bk(x)=Y K^{-1}\bk(x), \quad \forall x\in\cX.
\end{equation}

Through the prism of Willems' fundamental lemma \cite{Willems_2005_fundamental}, we can write down the following system of equations for any $x\in\cX$:
\begin{subequations}
\label{eq:fundk}
\begin{align}
K\bg&=\bk(x)\label{eq:fundka}\\
Y\bg&= G_{\cH(k,\cX)}(x),\label{eq:fundkb}
\end{align}
\end{subequations}
where $\bg=\bg(x)\in\Rset^T$ is a vector of optimization variables (not to be confused with the RBF function $g$ from the previous subsection). I.e., for every $x\in\cX$, there should exist a $\bg$ that satisfies  \eqref{eq:fundk}. We observe that the first block of equations in \eqref{eq:fundka} correspond to property $\emph(i)$ of the RKHS, i.e., for any $x\in\cX$, it must hold that $\bk(x)\in\cH(k,\cX):=\Span\{\bk(x_1), \bk(x_2),\ldots,\bk(x_T)\}$. Furthermore, for a positive definite kernel, when $K$ is invertible, the second block of equations in \eqref{eq:fundkb} ensures that 
\[G_{\cH(k,\cX)}^\ast(x)=Y\bg^\ast=YK^{-1}\bk(x)=\langle G,\bk(x)\rangle_{\cH(k,\cX)},\]
which is the reproducing property. 

\subsection{Problem formulation}
Several existing works have exploited physical properties of the operator \cite{Henk_kernel_2023,Shali_Henk_2024representer} or of the underlying system dynamics/functions \cite{Maddalena_kernels,RoKDeePC,Molodchyk_2024exploring} to define kernel functions and corresponding RKHS. Differently, in this paper, we do not make specific assumptions about the class of the underlying nonlinear dynamics and we ask the following fundamental questions:
\begin{itemize}
\item[Q1] Does there exist a class of kernel functions $k$ such that the resulting RKHS $\cH(k,\cX)$ is dense in the considered space of nonlinear systems operators ?
\item[Q2] Does there exist a class of kernel functions $k$ such that the resulting RKHS $\cH(k,\cX)$ is complete with respect to the considered space of nonlinear systems operators ?
\end{itemize}
In the next section, we provide a possible answer to the above questions and we show that construction of a universal RKHS for learning nonlinear systems operators is linked to how initial states or initial conditions are incorporated in the learning problem.

\section{Product reproducing kernel Hilbert spaces}
\label{sec3}
In what follows we adopt the following definitions of a dense and complete subspace in a Hilbert space from \cite[Chapter 3]{Yamamoto_2012}.
\begin{definition}
\label{def:dense}
Let $\cV=\Span\{v_i \ : \ i=1,2,\ldots\}$ be a subspace in a Hilbert space $\cX$. $V$ is \emph{dense} in $\cX$ if its closure agrees with the whole space $\cX$, i.e., if every point in $\cX$ is a closure point of $\cV$. 
\end{definition}
The above dense property can be regarded as a univeral approximation property similar to \eqref{eq:chen}, i.e., for every point $x\in\cX$ and any $\epsilon>0$, there exist points $\{v_1,\ldots, v_n\}$, $v_i\in\cV$ and coefficients $c_i$ such that $\|x-\sum_{i=1}^nc_iv_i\|\leq \epsilon$.  
\begin{definition}
\label{def:orto}
A familiy $\{e_n\}_{n=1}^\infty$ of non-zero elements in a Hilbert space $\cX$ is said to be an \emph{orthogonal system} if $\langle e_i,e_j\rangle=0$ for all $i\neq j$. If, further, $\|e_i\|=1$ for all $i$, then $\{e_n\}_{n=1}^\infty$ is called an \emph{orthonormal system}.
\end{definition}
\begin{definition}
\label{def:complete}
An orthonormal system $\{e_n\}_{n=1}^\infty$ in a Hilbert space $\cX$ is said to be \emph{complete} if it has the property that the only element orthogonal to all $e_n$ is $0$. 
\end{definition}

Recall next the considered operator learning problem, i.e., consider a set of input trajectories $\{\bu_i \ : \ i=1,2,\ldots\}$ with $\bu_i\in\cU$, a set of initial states $\{x_j \ : \ j=1,2,\ldots\}$ with $x_j\in\cX$ and a corresponding set of output trajectories \[\{\by_i^j \ : \ i=1,2,\ldots,\, j=1,2,\ldots\},\quad \by_i^j\in\cY,\] 
where $\cU,\cX,\cY$ are suitable Hilbert spaces. We would like to use these trajectories to learn the underlying operator $G(\bu)(x)$ that maps sequences in $\cU$ into sequences in $\cY$ for every point in $\cX$. To this end we will construct a \emph{product} RKHS as follows.

Let $k_u:\cU\times\cU\rightarrow\Rset$ and $k_x:\cX\times\cX\rightarrow\Rset$ be kernel functions, respectively. Further, for any set of points $\{\bu_i \ : \ i=1,2,\ldots\}$ with $\bu_i\in\cU$ and set of initial states $\{x_j \ : \ j=1,2,\ldots\}$ with $x_j\in\cX$
let $K_u, K_x$ denote the corresponding Gram matrices and let $\bk_u, \bk_x$ denote the corresponding vector-kernels. Then we can construct a product kernel function (e.g., see \cite[Lemma 4.6]{Steinwart_2008})
\begin{equation}
\label{eq:prok}
k_\otimes:(\cU\times\cX)\times(\cU\times\cX)\rightarrow \Rset,\quad k_\otimes((\bu_1,x_1),(\bu_2,x_2)):=k_u(\bu_1,\bu_2)k_x(x_1,x_2)
\end{equation}
with corresponding Gram matrix and vector-kernel, i.e.,
\begin{equation}
\label{eq:proGram}
K_\otimes=K_u\otimes K_x \quad\text{and}\quad \bk_\otimes(\bu,x)=\bk_u(\bu)\otimes\bk_x(x),\end{equation}
and unique \emph{product} RKHS $\cH(k_\otimes,\cU\times\cX)$.

We consider given a set of observations (measurements) $\{y_1^1,\ldots,y_{T_u}^{T_x}\}$ with $y_i^j\in\Rset^{n_y}$ (e.g., for an output trajectory from time $0$ to $N$, $n_y=p(N+1)$, where $p$ is the number of outputs) corresponding to the set of data points $\{u_1,\ldots,u_{T_u}\}$ with each $u_i\in\Rset^{n_u}$ (e.g., for an input trajectory from time $0$ to $N$, $n_u=m(N+1)$, where $m$ is the number of inputs) and $\{x_1,\ldots,x_{T_x}\}$ with each $x_j\in\Rset^{n_x}$.  Recall here that $x$ needs not be the true state, i.e., it can be formed from past inputs and outputs observations and the number of states $n_x$ depends on this choice. Then, letting $T:=T_uT_x$ the operator learning problem can be formulated in the \emph{product} RKHS $\cH(k_\otimes,\cU\times\cX)$ as 
\[\min_{\Theta\in\Rset^{T\times n_y}}\|Y^\top-K_\otimes\Theta\|_F^2,\]
where 
\[Y=\begin{pmatrix}y_1^1 & y_2^1 & \ldots & y_{T_u}^1  &  \ldots & y_1^{T_x} & y_2^{T_x} & \ldots & y_{T_u}^{T_x}\end{pmatrix}\in\Rset^{n_y\times T}.\]
The corresponding fundamental system of equations for any $(\bu,x)\in\cU\times\cX$ is:
\begin{subequations}
\label{eq:fundkpro}
\begin{align}
K_\otimes\bg&=\bk_\otimes(\bu,x)\label{eq:fundkproa}\\
Y\bg&= G_{\cH(k_\otimes,\cU\times\cX)}(\bu,x)\label{eq:fundkprob}
\end{align}
\end{subequations}
where $\bg\in\Rset^{T}$. If $K_\otimes\in\Rset^{T\times T}$ is full rank and thus invertible, the corresponding unique minimizer is obtained as 
\begin{equation}
\label{eq:proGstar}
\begin{split}
G_{\cH(k_\otimes,\cU\times\cX)}^\ast(\bu,x)&=(\Theta^\ast)^\top\bk_\otimes(\bu,x)=Y\bg^\ast\\&=YK_\otimes^{-1}\bk_\otimes(\bu,x)=\langle G,\bk_\otimes(\bu,x)\rangle_{\cH(k_\otimes,\cU\times\cX)}.
\end{split}
\end{equation}
We observe that the constructed product RKHS has a special topology, i.e., it corresponds to the product of two Hilbert spaces, i.e., the space of square summable input sequences of finite length and the space of initial states. This allows sampling these two spaces independently to generate the data set. 

Next, we state the main result, which establishes  conditions for universality of the constructed product RKHS. 
\begin{theorem}
\label{thm:uni:springer}
Suppose that $k_u,k_x\in C(\Rset)\cap S'(\Rset)$ are not even polynomials and that they are positive definite  kernel functions, e.g., Gaussian or reverse multiquadratic radial functions. Let $\Uset\times\Xset\subseteq\cU\times\cX$ be a compact set in $\cU\times\cX$ and let $G$ be a nonlinear continuous operator, which maps $\Uset$ into $\Yset\subset C(\Xset)$ for every $x\in\Xset$ and belongs to $\cH(k_\otimes, \Uset\times\Xset)$. Then the following results hold:
\begin{itemize}
\item[\emph{(i)}] For any $\epsilon>0$ there are positive integers $T_u, T_x$, and points $\{\bu_1,\ldots,\bu_{T_u}\}$, $\bu_i\in\Uset$, $\{x_1,\ldots,x_{T_x}\}$, $x_j\in\Xset$, such that 
\begin{equation}
\label{eq:thm1}
\|G(\bu)(x)-G_{\cH(k_\otimes,\Uset\times\Xset)}^\ast(\bu,x)\|_2< \epsilon,
\end{equation}
for all $\bu\in\Uset$ and all $x\in\Xset$, i.e., $\Span\{\{k_\otimes(\cdot, (u_i,x_j))\}_{i=1,\ldots,T_u,j=1,\ldots,T_x}\}$ is dense in the space of nonlinear systems operators that map $\Uset$ into $\Yset\subset C(\Xset)$.
\item[\emph{(ii)}] There exists a possibly infinite set of points $\{(\bu,x)_i \ : \ i=1,2,\ldots\}$, $(\bu,x)_i\in\Uset\times\Xset$, such that the $\Span\{\{k_\otimes(\cdot, (\bu,x)_i)\}_{i=1,2,\ldots}\}$ induces a complete orthonormal system in the space of nonlinear systems operators that map $\Uset$ into $\Yset\subset C(\Xset)$.
\end{itemize}
\end{theorem}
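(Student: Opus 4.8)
The plan is to prove part \emph{(i)} by an orthogonal-projection argument in the product RKHS and part \emph{(ii)} by combining separability of $\cH(k_\otimes,\Uset\times\Xset)$ with a Gram--Schmidt orthonormalization, using Theorem~\ref{thm:Chen_and_Chem} only as the bridge between the class of continuous systems operators and the product kernel. First I would apply Theorem~\ref{thm:Chen_and_Chem} componentwise to $[G(\bu)(x)]_q$ to obtain, for each prescribed accuracy, a finite double sum $\sum_{i,j}c_{iq}^j g_u(\mu_i\|\bu-\bu_i\|)g_x(\lambda_j\|x-x_j\|)$ that is uniformly close to $[G(\bu)(x)]_q$ on $\Uset\times\Xset$; by the kernel/RBF identification \eqref{eq:rbf:ker} each product $g_u(\cdot)g_x(\cdot)$ is a section $k_\otimes(\cdot,(\bu_i,x_j))=k_u(\cdot,\bu_i)k_x(\cdot,x_j)$ of the product kernel, and since $k_u,k_x$ are positive definite (so $k_\otimes$ is a universal kernel, cf.\ \cite[Lemma~4.6]{Steinwart_2008},\cite{Micchelli_2006}), the union over all finite center sets of $\Span\{k_\otimes(\cdot,(\bu_i,x_j))\}_{i,j}$ is dense, in the uniform norm on the compact set $\Uset\times\Xset$, in the operator class mapping $\Uset$ into $\Yset\subset C(\Xset)$; that dense subspace is precisely the one whose closure in the RKHS norm is $\cH(k_\otimes,\Uset\times\Xset)$.

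For part \emph{(i)}, fix $G\in\cH(k_\otimes,\Uset\times\Xset)$ and $\epsilon>0$. Since the kernel sections span a dense subspace of any RKHS, I would choose a common finite center set $\{\bu_1,\dots,\bu_{T_u}\}\subset\Uset$, $\{x_1,\dots,x_{T_x}\}\subset\Xset$ (the union of the ones serving each of the $p(N+1)$ components) so that every $[G]_q$ lies within $\delta$, in $\cH(k_\otimes,\Uset\times\Xset)$-norm, of $\Span\{k_\otimes(\cdot,(\bu_i,x_j))\}_{i,j}$. The crucial observation is that the minimizer $G_{\cH(k_\otimes,\Uset\times\Xset)}^\ast(\bu,x)=YK_\otimes^{-1}\bk_\otimes(\bu,x)$ in \eqref{eq:proGstar} is exactly the minimum-norm interpolant of $G$ at these $T=T_uT_x$ points, hence componentwise the orthogonal projection of $[G]_q$ onto that span, so $\|[G]_q-[G_{\cH(k_\otimes,\Uset\times\Xset)}^\ast]_q\|_{\cH(k_\otimes,\Uset\times\Xset)}\le\delta$ for all $q$. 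The reproducing property and Cauchy--Schwarz then give $|[G(\bu)(x)]_q-[G_{\cH(k_\otimes,\Uset\times\Xset)}^\ast(\bu,x)]_q|\le\delta\sqrt{k_\otimes((\bu,x),(\bu,x))}\le\delta M$ with $M:=\sup_{z\in\Uset\times\Xset}\sqrt{k_\otimes(z,z)}<\infty$ (the map $z\mapsto k_\otimes(z,z)$ is continuous on the compact set, and $M=1$ for the Gaussian and reverse-multiquadric kernels); summing over $q=1,\dots,p(N+1)$ and taking $\delta<\epsilon/(M\sqrt{p(N+1)})$ yields \eqref{eq:thm1}.

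For part \emph{(ii)}, I would use that the compact metric space $\Uset\times\Xset$ is separable and that $\|k_\otimes(\cdot,z)-k_\otimes(\cdot,z')\|_{\cH(k_\otimes,\Uset\times\Xset)}^2=k_\otimes(z,z)-2k_\otimes(z,z')+k_\otimes(z',z')\to 0$ as $z'\to z$ by continuity of $k_\otimes$. Hence for any countable dense subset $\{(\bu,x)_i\ :\ i=1,2,\dots\}$ of $\Uset\times\Xset$, the closed span of $\{k_\otimes(\cdot,(\bu,x)_i)\}_{i\ge1}$ contains every section $k_\otimes(\cdot,z)$ and therefore equals $\cH(k_\otimes,\Uset\times\Xset)$. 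Since $k_\otimes$ is a positive definite kernel, every finite subfamily of these sections is linearly independent, so Gram--Schmidt orthonormalization (skipping no member) produces an orthonormal system $\{e_n\}_{n\ge1}$ with $\Span\{e_1,\dots,e_n\}=\Span\{k_\otimes(\cdot,(\bu,x)_1),\dots,k_\otimes(\cdot,(\bu,x)_n)\}$ for every $n$; consequently its closed span is all of $\cH(k_\otimes,\Uset\times\Xset)$, so the only element orthogonal to every $e_n$ is $0$, i.e.\ $\{e_n\}_{n\ge1}$ is a complete orthonormal system in the sense of Definition~\ref{def:complete}, and by part \emph{(i)} $\cH(k_\otimes,\Uset\times\Xset)$ is dense in the targeted operator space.

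The step I expect to be the main obstacle is reconciling the \emph{center-dependent} scalings $\mu_i,\lambda_j$ in Theorem~\ref{thm:Chen_and_Chem} with the \emph{single} bandwidth that a genuine kernel $k_u$ or $k_x$ carries: the sum \eqref{eq:chen} is not literally a linear combination of sections of one fixed product kernel. I would close this gap by invoking that fixed-bandwidth Gaussian and reverse-multiquadric kernels are already universal on compacta \cite{Micchelli_2006}, so each variable-bandwidth radial basis function can itself be approximated uniformly on $\Uset$ (resp.\ $\Xset$) by fixed-bandwidth sections --- equivalently, universality of $k_\otimes$ already makes $\cH(k_\otimes,\Uset\times\Xset)$ dense in $C(\Uset\times\Xset)$, and Theorem~\ref{thm:Chen_and_Chem} then serves only to place the operator class inside $C(\Uset\times\Xset)$ via the aggregation construction described after it. A secondary subtlety is that ``$G\in\cH(k_\otimes,\Uset\times\Xset)$'' should be read together with this density, which is what makes $G_{\cH(k_\otimes,\Uset\times\Xset)}^\ast$ in part \emph{(i)} exactly the orthogonal projection of $G$ rather than merely a near-interpolant.
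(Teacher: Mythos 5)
Your proposal is correct in substance, but it takes a genuinely different route from the paper. For the bound \eqref{eq:thm1} the paper goes through Theorem~\ref{thm:Chen_and_Chem}: it absorbs the center-dependent bandwidths $\mu_i,\lambda_j$ into modified coefficients $\bar c_{iq}^j$ so that the RBF expansion becomes $\Theta^\top\bk_\otimes(\bu,x)$, an element of the finite span of product-kernel sections, and then passes from $G_{RBF}$ to the minimizer via an optimality claim. You instead exploit the hypothesis $G\in\cH(k_\otimes,\Uset\times\Xset)$ directly: density of kernel sections in the RKHS gives a finite grid of centers within $\delta$ of each component in RKHS norm, the minimizer \eqref{eq:proGstar} is identified as the minimum-norm interpolant and hence the componentwise orthogonal projection onto that span, and the reproducing property, Cauchy--Schwarz, and boundedness of $k_\otimes$ on the compact set convert the RKHS-norm error into the uniform bound. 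This is sound, and it in fact sidesteps two delicate points in the paper's own argument: the rescaling factor $\bar\lambda(z_2)=g(\lambda(z_2)\|z_1-z_2\|)/g(\lambda\|z_1-z_2\|)$ used there also depends on $z_1$, so the conversion to a fixed-bandwidth expansion is not literal (you identified exactly this obstacle), and the paper's inequality \eqref{eq:uni:proof} compares the minimizer with $G_{RBF}$ pointwise even though $G_{RBF}$ is only known to be close in the sup norm, not in the RKHS norm. The price of your route is that Theorem~\ref{thm:Chen_and_Chem} becomes essentially decorative: the density claim for the whole operator class is carried by $C$-universality of fixed-bandwidth Gaussian/inverse-multiquadric kernels \cite{Micchelli_2006}, which is a stronger property than the strict positive definiteness of Definition~\ref{kerdef} used in the paper, and you should add a line justifying that the tensor product $k_\otimes=k_uk_x$ inherits this universality on $\Uset\times\Xset$ (true, but not automatic from each factor alone). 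For part \emph{(ii)} both you and the paper rely on linear independence of the sections plus Gram--Schmidt and then density from part \emph{(i)}; your extra step --- taking a countable dense subset of $\Uset\times\Xset$ and using continuity of $z\mapsto k_\otimes(\cdot,z)$ in the RKHS norm --- is a useful sharpening, since it produces a single countable family whose closed span is the whole space, whereas the paper obtains finitely many points per accuracy level and leaves the passage to one infinite family implicit.
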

\begin{proof}
\emph{(i)} Recall the relation \eqref{eq:rbf:ker} between kernel functions $k(z_1,z_2)$ and radial basis functions $g(\lambda \|z_1-z_2\|)$. The parameter $\lambda>0$ is typically also present in kernel functions, but not explicitly mentioned and it is chosen independently of the data points. Since the universal approximation theorem \cite[Theorem 5]{Chen_RBF} and condition \eqref{eq:chen} are formulated in terms of data-dependent $\lambda_j, \mu_i$, we will first show that when the RBFs $g$ correspond to universal radial functions/kernel, inequality \eqref{eq:chen} holds with data-independent parameters $\lambda$ (and $\mu$). To this end, define a RBF function
$g\in(\cZ\times\cZ)\times\Rset_+\rightarrow\Rset$ for some suitable set $\cZ$, corresponding to a universal positive definite radial kernel function $k$ as $g(\lambda \|z_1-z_2\|):=k(z_1,z_2)$, which explicitly reveals the dependence on the parameter $\lambda$. For such functions, e.g., as defined in \eqref{eq:gauss:k}, \eqref{eq:hardyR:k}, it holds that $0<g(\lambda\|z_1-z_2\|)< g(\lambda \|z_1-z_1\|)=g(0)$ for any $(z_1,z_2)\in\cZ\times\cZ$ and $\lambda\in\Rset_+$, e.g. for Gaussian and reverse multiquadratics $g(0)=1$. Hence, for any $\lambda(z_2),\lambda\in\Rset_+$, there exists a $\bar\lambda(z_2)\in\Rset_+$ such that $g(\lambda(z_2)\|z_1-z_2\|)=\bar\lambda(z_2)g(\lambda\|z_1-z_2\|)$, i.e. $\bar\lambda(z_2):=\frac{g(\lambda(z_2)\|z_1-z_2\|)}{g(\lambda\|z_1-z_2\|)}$.

Next, define the RBF approximator corresponding to radial basis functions $g_u,g_x$ obtained from the positive definite radial kernel functions $k_u,k_x$, i.e., which are defined using parameters $\mu,\lambda\in\Rset_+$ (e.g., these parameters are typically chosen as the average distances between neighboring points over the whole data set; or they could be identified from data), respectively, i.e. 
\begin{equation}
\label{eq:proof:1}
[G_{RBF}(\bu,x)]_q:=\sum_{i=1}^{T_u}\sum_{j=1}^{T_x}\bar c_{iq}^j g_u(\mu\|\bu-\bu_i\|)g_x(\lambda\|x-x_j\|),
\end{equation}
where $\bar c_{iq}^j:=c_{iq}^j\bar\mu(\bu_i)\bar\lambda(x_j)$ for all $(i,j)$ and $q=1,\ldots,n_y$. Then by Theorem~\ref{thm:Chen_and_Chem} (recall that we can apply this theorem with different functions $g_u, g_x$ as long as they are in the right class of radial functions; choosing the same kernel function and the same $g$ is also possible), for any $\epsilon>0$ there exist positive integers $T_u, T_x$, corresponding set of points and coefficients $\bar c_{iq}^j$ such that 
\begin{equation}
\label{eq:proof:2}
\|G(\bu)(x)-G_{RBF}(\bu,x)\|_2^2<\epsilon,\quad \forall \bu\in\Uset,\, x\in\Xset.
\end{equation}
Indeed, we can apply Theorem~\ref{thm:Chen_and_Chem} to attain \eqref{eq:chen} with $\epsilon$ replaced by $\sqrt{\frac{\epsilon}{n_y}}$ to attain \eqref{eq:proof:2}. Then we can expand \eqref{eq:proof:1} and write out the RBF approximator as a function of the corresponding kernels $k_u, k_x$, i.e., 
\begin{equation}
\label{eq:proof:3}
\begin{split}
[G_{RBF}(\bu,x)]_q&=\sum_{i=1}^{T_u}\sum_{j=1}^{T_x}\bar c_{iq}^j g_u(\mu\|\bu-\bu_i\|)g_x(\lambda\|x-x_j\|)\\
&=\sum_{i=1}^{T_u}\sum_{j=1}^{T_x}\bar c_{iq}^j k_u(\bu,\bu_i)k_x(x,x_j)\\
&=\theta_q^\top \bk_\otimes(\bu,x),
\end{split}
\end{equation}
where 
$\theta_q = (\bar c_{1 q}^1 \, \ldots \, \bar  c_{T_u q}^{T_x})^\top\in\Rset^{T}$ for all $q=1,\ldots,n_y$. Hence we can build a matrix of coefficients $\Theta\in\Rset^{T\times n_y}$ with its $q$-th column equal to $\theta_q$ such that $G_{RBF}(\bu,x)=\Theta^\top\bk_\otimes(\bu,x)$.

Next, we observe that since $k_u$ and $k_x$ are positive definite kernels, their corresponding Gramians $K_u,K_x$ have full rank and are invertible for any set of distinct points in $\Uset$ and $\Xset$, respectively. Then, by the property of the Kronecker product, we have that $\rankk(K_\otimes)=\rankk(K_u)\rankk(K_x)$,
i.e., the product Gram matrix $K_\otimes$ also has full rank and it is invertible. This could also be established by proving that the product kernel $k=k_u k_x$ is positive definite when $k_u$ and $k_x$ are positive definite. Hence, the fundamental system of equations \eqref{eq:fundka} has a unique solution $\bg^\ast$ for each $(\bu,x)$, which yields 
\[G_{RBF}(\bu,x)=\Theta^\top\bk_\otimes(\bu,x)=\Theta^\top K_\otimes\bg^\ast(\bu,x),\quad \forall (\bu,x)\in\Uset\times\Xset.\] 
Hence, $G_{RBF}(\bu,x)$ is an interpolant in the product RKHS $\cH(k_\otimes, \Uset\times\Xset)$. Since  we assume that $G(\bu)(x)$ belongs to $\cH(k_\otimes,\Uset\times\Xset)$, it follows that \cite{Fasshauer2011PositiveDK} $G_{\cH(k_\otimes,\Uset\times\Xset)}^\ast(\bu,x)$ provides the best approximation of $G(\bu)(x)$ from $\cH(k_\otimes, \Uset\times\Xset)$ with respect to the 2-norm, which yields
\begin{equation}
\label{eq:uni:proof}
\|G(\bu)(x)-G_{\cH(k_\otimes,\Uset\times\Xset)}^\ast(\bu,x)\|_2\leq \|G(\bu)(x)-G_{RBF}(\bu,x)\|_2 < \epsilon,\end{equation} for all $(\bu,x)\in\Uset\times\Xset$, 
which completes the proof of statement \emph{(i)}.

To prove statement \emph{(ii)}, note that by the positive definiteness of the product kernel function $k_\otimes$, for any set of distinct points $\{(\bu,x)_i \ : \ i=1,2,\ldots\}$, $(\bu,x)_i\in\Uset\times\Xset$, the Gram matrix $K_\otimes$ is full rank and thus, the sequence of kernel-vectors $\{k_\otimes(\cdot, (u,x)_1),\ldots, k_\otimes(\cdot, (u,x)_n),\ldots\}$ is linearly independent, i.e., any finite subset is linearly independent. Then, one can choose an orthonormal family $\{e_1,\ldots,e_n,\ldots\}$ such that for every $n$, $\Span\{e_1,\ldots,e_n\}=\Span\{k_\otimes(\cdot, (u,x)_1),\ldots, k_\otimes(\cdot, (u,x)_n)\}$ via the Gram-Schmidt orthohonalization procedure, see, e.g., \cite[Chapter 3, Problem 1]{Yamamoto_2012}. Since in statement \emph{(i)} we have shown that $\Span\{k_\otimes(\cdot, (u,x)_1),\ldots, k_\otimes(\cdot, (u,x)_n)\}$ is dense in the space of nonlinear systems operators that map $\Uset$ into $\Yset\subset C(\Xset)$, it follows that $\Span\{e_n\ : \ n=1,2,\ldots\}$ is dense in this space as well. Then, by \cite[Proposition 3.2.19]{Yamamoto_2012} we have that the orthonormal system $\{e_n\}_{n=1}^\infty$ is complete. \qed
\end{proof} 

The above results show that the developed product RKHS offer a universal framework for learning operators arising in discrete-time nonlinear dynamical systems, which is intuitive and computationally efficient, i.e., it scales well with the number of data points in terms of building the product Gram matrix $K_\otimes$. See the illustrative example for numerical details. 
\begin{remark}
In the proof of Theorem~\ref{thm:uni:springer} we require the assumption that $G(\bu)(x)$ belongs to the product kernel reproducing Hilbert space $\cH(k_\otimes,\Uset\times\Xset)$, which corresponds to the kernel functions choice. This assumption is necessary for inequality \eqref{eq:uni:proof} to hold, because the RBF approximation of the operator, i.e., $G_{RBF}(\bu,x)$ defined in \eqref{eq:proof:1}, employs data-dependent interpolation coefficients. Removing this assumption will be considered in future work.  
\end{remark}
A similar product RKHS can be developed for continuous dynamical systems operators by additionally sampling in time, i.e., by considering a product Hilbert space $\cU\times\cX\times\cT$ with associated product kernel functions and RKHS. Alternatively, finitely parameterized input sequences could be pursued as done in \cite{Shali_Henk_2024representer} in combination with a product RKHS in $\cU\times\cX$. 

The developed results yield a new insight in persistency of excitation for nonlinear systems, i.e., that a persistently exciting input sequence should be paired with a linearly independent/distinct set of points in the space of initial conditions. This is not necessary for linear systems, but seems to play a key role for nonlinear systems as the product Gram matrix is full rank if and only if the Gram matrix in the input space and the Gram matrix in the space of initial conditions is full rank. Last but not least, the developed results can be extended to Gram matrices that are not full rank by using singular value decomposition or orthogonal projection to remove the data points that result in very small or zero singular values for the product Gram matrix. This is interesting because it would allow using any type of kernel functions, not just positive definite ones.

\section{Illustrative example}
\label{sec4}
Consider a discrete-time state space model of the Van der Pol oscillator 
\begin{align*}
x_1(k+1) &= x_1(k) + T_s x_2(k) \\
x_2(k+1) &= x_2(k) + T_s(\mu(1-x^2_1(k)) x_2(k)-x_1(k)+u(k))\\
y(k)&= x(k),
\end{align*}
with $\mu=1$ and sampling period $T_s=0.1$. The output is equal to the full state and we would like to learn an operator that predicts the output trajectories of the system $N=10$ steps ahead. We will implement the developed product RKHS framework and we will compare the results with the standard RKHS framework. The two approaches differ in terms of data generation and how they scale with the number of data points. As such, to ensure a reasonably fair comparison, we fine tune each approach to get the best possible results, while using the same type of kernel function and the same method for generating a persistently exciting input signal. The results are obtained using the same laptop computer (Lenovo ThinkPad X1, Intel vPro i7 processor) and Matlab 2023a.

To generate a persistently exciting input we use the Matlab function \emph{idinput}, a sum-of-sinusoidal type of input signal with Range as $[-5, 5]$, SineData as $[25, 40, 1]$, Band as $[0, 1]$, NumPeriod as $1$ and Nu as $1$. As the kernel function we use the Hardy reverse multiquadratic kernel \eqref{eq:hardyR:k} with $\sigma=\sqrt{4}$ for the standard kernel approach and, $\sigma_x=\sqrt{0.4}$ and $\sigma_u=\sqrt{2}$ for the product kernel approach.  

For the product kernel approach we select $T_x=150$ initial conditions (states) and we construct $T_u=290$ input sequences via the Hankel matrix approach from a generated persistently exciting input signal of sufficient length. The resulting ouput trajecetories used for operator learning are plotted in Figure~\ref{fig1}.
\begin{figure}[h]
\centering
    \includegraphics[width=0.8\linewidth]{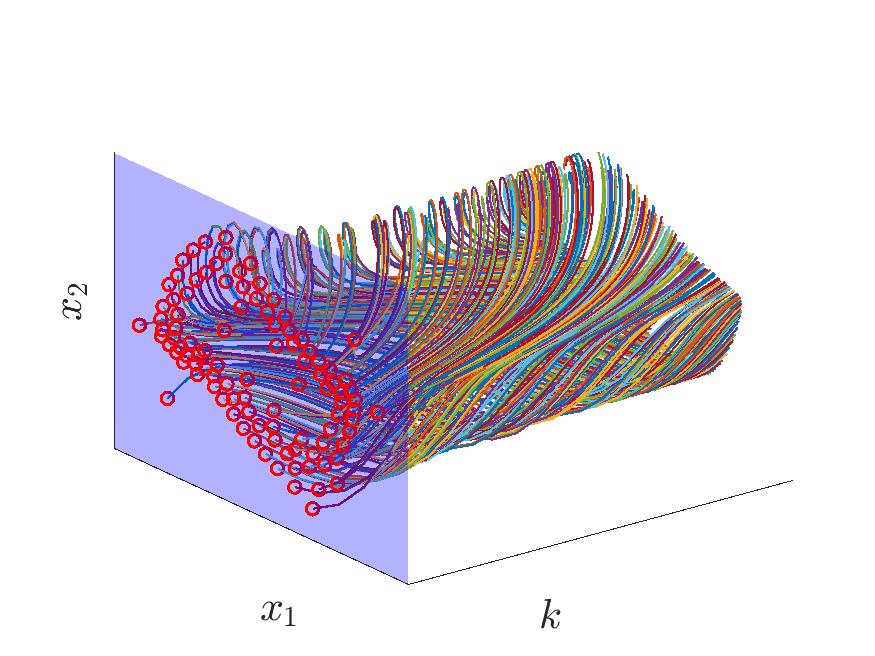}
 \caption{Output trajectories used for learning the operator in the product RKHS.} 
\label{fig1}
\end{figure}

Building the Gram matrices $K_u$ and $K_x$ requires $T_u^2+T_x^2=106600$ kernel function evaluations, which takes less than $5$ minutes. Then the inverse product Gram matrix $K^{-1}_\otimes$ is obtained by inverting $K_u$ and $K_x$ and computing their Kronecker product which takes less than $2$ seconds. By merging the initial states and the input sequences, the product RKHS approach generates $43500$ data points in the lifted $(x,u)$ space and a product Gram matrix $K_\otimes$ of dimension $43500^2$, i.e., roughly $1.8$ billions. Using so many data points in the standard RKHS approach would require $1.8$ billion kernel function evaluations just to build the Gram matrix, which is intractable/very time consuming. 

The prediction capability of the operator learned used the product RKHS approach is shown in Figure~\ref{fig2}.
\begin{figure}[h]
\centering
    \includegraphics[width=0.8\linewidth]{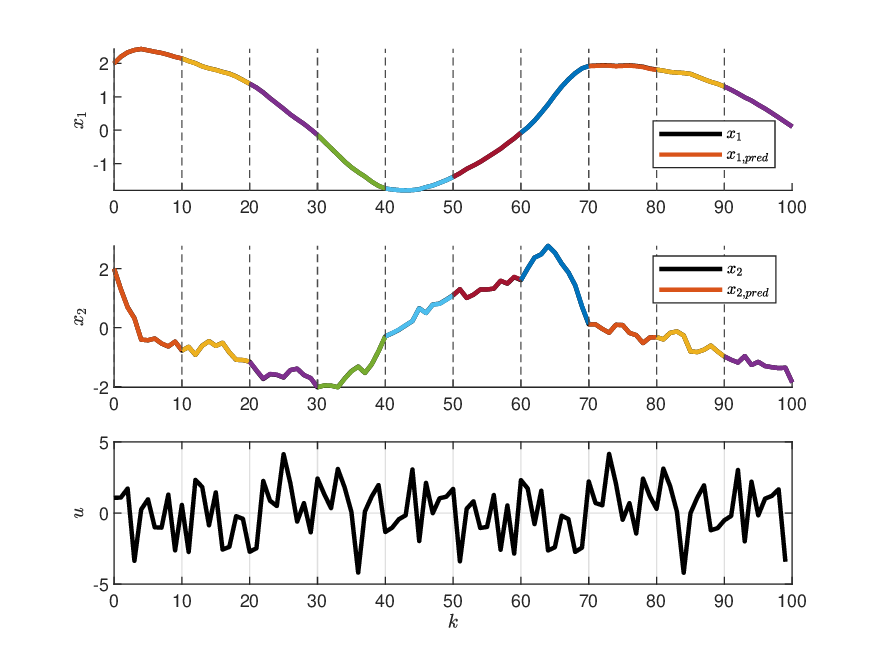}
 \caption{Validation of the product kernel operator predictions for several initial conditions not part of the training data and a random input signal.} 
\label{fig2}
\end{figure}

To test the standard RKHS approach, we generated a persistently exciting input signal of length $10000$, which results in $9990$ data points in the lifted space $(x,u)$ via the Hankel matrix approach. This yields a Gram matrix $K$ of dimension $9990^2$ or roughly $0.1$ billions. Building the Gram matrix took more than an hour and inverting it took $20$ seconds. Attempting to build the Gram matrix for a signal of length $20000$ did not return a result after several hours. The prediction capability of the mapping learned using the standard RKHS approach is shown in Figure~\ref{fig3}.
\begin{figure}[h]
\centering
    \includegraphics[width=0.8\linewidth]{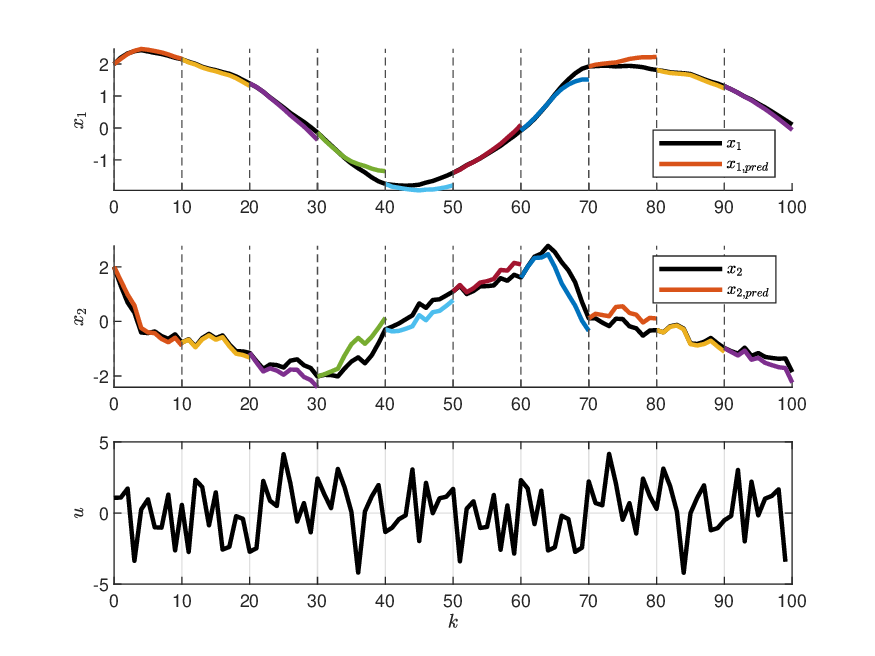}
 \caption{Validation of the standard kernel mapping predictions for the same initial conditions and random input signal used to validate the product kernel operator.} 
\label{fig3}
\end{figure}

To better compare the accuracy of the implemented learning methods, in Figure~\ref{fig4} we provide a Root Mean Square (RMS) prediction error analysis per each time step, for the first state $x_1$.
\begin{figure}[h]
\centering
    \includegraphics[width=0.8\linewidth]{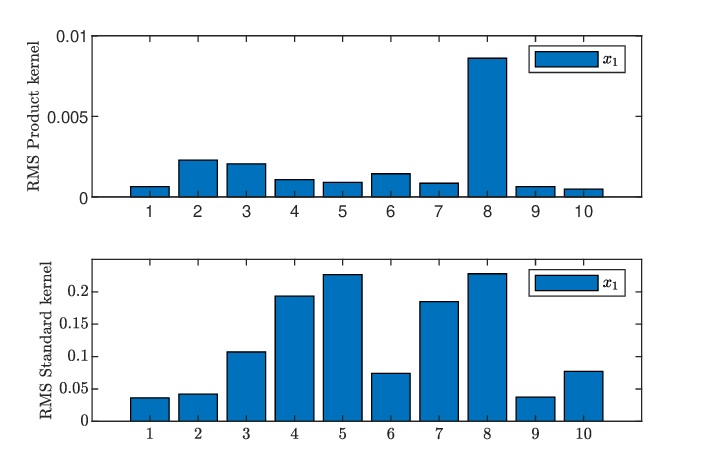}
 \caption{RMS for the prediction of $x_1$ for each time step $i=1,\ldots,10$.} 
\label{fig4}
\end{figure}
The obtained results demonstrate the efficiency and accuracy of the developed product RKHS framework for learning nonlinear systems operators.

\section{Conclusions}
\label{sec5}
Based on the universal approximation theorem of operators tailored to radial basis functions neural networks, we constructed a universal class of kernel functions as the product of kernel functions in the space of input sequences and initial states, respectively. We proved that for positive definite kernel functions, the resulting \emph{product reproducing kernel Hilbert space} is dense and even complete in the space of nonlinear systems operators that map input sequences and initial states into output trajectories. This provides a universal kernel-functions-based framework for learning nonlinear systems operators that is intuitive and easy to apply.

In future work we will present a relation between product RKHS operators and the Koopman operator with important implications for the nonlinear fundamental lemma. Also, we will exploit product RKHS operators to design data-enabled predictive controllers for nonlinear systems.
\paragraph{Acknowledgements}
The author would like to express his gratitude to the PhD researcher Thomas de Jong for building the simulation scripts and the figures for the illustrative example. The author is also grateful to the anonymous reviewers for their helpful comments.

\bibliographystyle{spmpsci}
\bibliography{Mircea}

\end{document}